\documentclass{article}

\usepackage[english]{babel}
\usepackage{amssymb}
\usepackage{amsthm}
\usepackage{amsmath}
\usepackage{a4wide}
\usepackage{esvect}
\usepackage{hyperref}
\hypersetup{
    colorlinks,
    linkcolor=red,
    citecolor=black,
    urlcolor=blue
}
\usepackage{verbatim}
\usepackage[dvips]{graphicx}
\usepackage{t1enc}
\usepackage{color}
\usepackage{geometry}

\usepackage{tikz}
\usepackage{caption}
\usepackage{subcaption}
\captionsetup{justification=centering}

\newtheorem{thm}{Theorem}%[chapter]
\newtheorem{lem}[thm]{Lemma}

\newtheorem{conjecture}[thm]{Conjecture}
\newtheorem{proposition}[thm]{Proposition}

\newtheorem{corollary}[thm]{Corollary}

\newcommand{\ignore}[1]{}

\linespread{1.3}
\pagestyle{plain}

\title{On a conjecture of Gentner and Rautenbach} % running head version
%{Recent Results in Zero-Forcing of Graphs}

\author{Ant\'onio Gir\~ao\thanks{Department of Pure Mathematics and
    Mathematical Statistics, University of Cambridge, Cambridge, UK; \texttt{A.Girao@dpmms.cam.ac.uk}} \and G\'abor M\'esz\'aros \thanks{Department of Mathematical Sciences, The University of Memphis, Memphis, TN; \texttt{gmszaros@memphis.edu}} \and Stephen G. Z. Smith \thanks{Department of Mathematical Sciences, The University of Memphis, Memphis, TN; \texttt{sgsmith1@memphis.edu}}}

\begin{document}

\maketitle

\begin{abstract}
Gentner and Rautenbach conjectured that the size of a minimum zero forcing set in a connected graph on $n$ vertices with maximum degree $3$ is at most $\frac{1}{3}n+2$. We disprove this conjecture by constructing a collection of connected graphs $\{G_n\}$ with maximum degree 3 of arbitrarily large order having zero forcing number at least $\frac{4}{9}|V(G_n)|$. 
\end{abstract}
\maketitle 

%%%%%%%%%%%%%%%%%%%%%%%%%%%%%%%%%%%%%%%%%%%%%%%%%%%%%%%%%
%%%%%%%%%%%%%%%%%%%%%%%%%%%%%%%%%%%%%%%%%%%%%%%%%%%%%%%%%
%					INTRODUCTION
%%%%%%%%%%%%%%%%%%%%%%%%%%%%%%%%%%%%%%%%%%%%%%%%%%%%%%%%%
%%%%%%%%%%%%%%%%%%%%%%%%%%%%%%%%%%%%%%%%%%%%%%%%%%%%%%%%%
\section{Introduction}\label{sec:intro}
The Zero Forcing Number of a graph was first introduced by Burgarth and Giovannetti in 2007 \cite{physics} and independently by the AIM Minimum Rank - Special Graphs Workgroup in 2008 \cite{aim}. The original motivation for the latter came from the problem of bounding the minimum rank over all symmetric real matrices whose $ij$th entry ($i\neq j$) is nonzero whenever $ij$ is an edge of a graph $G$ and zero otherwise, while the former introduced this parameter to help them describe the controllability of certain quantum systems. Despite its beginnings in linear algebra and small applications in physics, the model has received considerable attention from combinatorialists due to its obvious ties to graph theory (\cite{what,today,plusone,conj2}).

The zero forcing process is a discrete-time process in which we start with a set $S$ of vertices of a graph $G$ which are initially colored black, while the remaining vertices are colored white. At each time step, the following rule is applied. Namely, if $u$ is a white vertex in $G$, it will be become black if it is the only white neighbor of some black vertex $v$, and we say that the vertex $v$ \textit{forced} $u$ to change color. 
 Once a vertex has been changed to black, it remains black forever. If every vertex of $G$ becomes black in finite time we say that $S$ is a \textit{zero forcing set}.
  Throughout this paper, every graph will be finite, simple, and undirected.

We define the zero forcing number of a graph $G$, denoted $Z(G)$, to be the mi\-ni\-mum cardinality over all \textit{zero forcing sets} of $G$.

Amos et al. \cite{1/3} proved that for a connected graph $G$ of order $n$ and maximum degree $\Delta\geq 2$
\[Z(G) \leq \frac{\Delta -2}{\Delta -1}n+\frac{2}{\Delta+1}.
\]
It is not difficult to show that this bound is attained exactly when $G$ is either $K_{\Delta +1}$, the complete bipartite graph $ K_{\Delta,\Delta}$ or a cycle. Later, pushing this bound a little further, Gentner and Rautenbach \cite{conj} were able to remove the additive constant $\frac{2}{\Delta+1}$ (for $\Delta \geq 3$). Namely, they showed that $Z(G) \leq \frac{\Delta - 2}{\Delta -1}n$ holds for every connected graph $G$ of order $n$ and maximum degree $\Delta \geq 3$, unless when $G$ is one of five exceptional graphs $K_{\Delta + 1}, K_{\Delta, \Delta}, K_{\Delta - 1, \Delta}$ or two other specific graphs (we do not exhibit them, for full details see \cite{conj}). Note that the zero forcing number of a connected graphs with maximum degree $2$ is completely understood. 
Indeed, for such graphs the forcing number is either $1$ in the case of a path or $2$ in the case of a cycle. However, even when the maximum degree is $3$, the following value
\[ z_3=\lim_{n \rightarrow \infty} \sup \{ \frac{Z(G)}{|V(G)|}: G \text{ connected, } |V(G)|\geq n \text{ and }\Delta(G)\leq 3\}
\] is not known. The currently best known upper bound for $z_3\leq 1/2$ is due to Amos et al. and follows from the result mentioned above. Furthermore, Gentner and Rautenbach (\cite{conj}), have proved that the upper bound of $n/2$ is far off when $G$ has maximum degree $3$ and girth at least $5$, where $n$ is the order of $G$. They showed that such graphs have zero forcing number at most $\frac{n}{2} - \frac{n}{24 \textrm{log}_2n +6} + 2$. We remark this result does not affect the best known upper bound for $z_3$ but suggests $1/2$ might not be the correct value. Motivated by this, the same authors conjectured that $Z(G)\leq \frac{1}{3}n+2$ for every connected graph $G$ with maximum degree $3$ \cite{conj}. 

In this short note, we disprove this conjecture by presenting an infinite family of connected graphs $\{G_n\}$, with maximum degree $3$, such that the zero forcing number of $G_n$ is at least $\frac{4}{9}|V(G_n)|$, thus proving $z_3\geq \frac{4}{9}$.

%%%%%%%%%%%%%%%%%%%%%%%%%%%%%%%%%%%%%%%%%%%%%%%%%%%%%%%%%
%%%%%%%%%%%%%%%%%%%%%%%%%%%%%%%%%%%%%%%%%%%%%%%%%%%%%%%%%
%		ON THE CONJECTURE OF GENTNER AND RAUTENBACH
%%%%%%%%%%%%%%%%%%%%%%%%%%%%%%%%%%%%%%%%%%%%%%%%%%%%%%%%%
%%%%%%%%%%%%%%%%%%%%%%%%%%%%%%%%%%%%%%%%%%%%%%%%%%%%%%%%%
\section{Counterexamples to a conjecture of Gentner and Rautenbach}\label{sec:counter_example}
\begin{comment}
A binary tree $B_d$ of depth $d\geq 1$ is a tree on $2^d-1$ vertices with a special vertex $r$ called \textit{root} satisfying the following recursive property: if $d = 1$ then the tree consists of a single vertex, else $r$ has two neighbors that are the roots of two binary trees of depth $d-1$.

\footnote{Note that $F(B_d) = \frac{2^d-(-1)^d}{3}$ was proved by Boesch, Chen and McHugh; the proof relates to the result (indicated in the same paper) that calculates the zero-forcing number of arbitrary trees in linear time with respect to their size.}
\end{comment}
We create our counterexamples by substituting each leaf of a complete binary tree $B_d$ on on $2^d-1$ vertices ($d$ odd), by a complete graph on $4$ vertices with one of its edges subdivided (see Figure 1). 
Indeed, let $G_n$ ($n\geq 1$) be the graph obtained by replacing every leaf of $B_{2n-1}$ by the aforementioned subdivided $K_4$. We also denote $y_{n-1}^1$, $y_{n-1}^2$ to be the neighbors of $r_n$ in $G_n$ and $H_{n-1}^1$, $H_{n-1}^2$ to be the corresponding connected components of $G_n - r_n$. Observe that both subgraphs are isomorphic to the binary tree $B_{2n-2}$ with their leaves replaced by the subdivided $K_4$. Moreover, let $\widehat{G}_n$ be the graph obtained from $G_n$ by attaching a new leaf $y_n$ to the root $r_n$ of the underlying binary tree in $G_n$. Throughout this note, we will view $G_n$ as a subgraph of $\widehat{G}_n$ and containing $4$ induced copies of $G_{n-1}$.  Observe that the maximum degree of $G_n$ and $\widehat{G}_n$ is $3$, for all $n\geq 1$.  

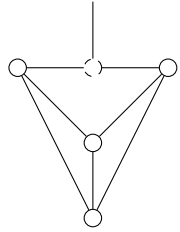
\begin{figure}[h]\label{K4}
\begin{tikzpicture}[every node/.style={circle, scale=0.7}]
%G_1
\node (fake) at (7.5,2) [draw=white,dashed, fill=white, label=right:{}]{};
\node (g1)   at (15,2) [draw=white,dashed, fill=white, label=right:{}]{};
\node (a1)   at (14,1) [draw=black, fill=white]{};
\node (d1)   at (15,1) [draw=black,dashed, fill=white, label={[xshift=0.5cm,yshift=-0.3cm]}]{};
\node (e1)   at (15,0) [draw=black, fill=white]{};
\node (b1)   at (16,1) [draw=black, fill=white]{};
\node (c1)   at (15,-1) [draw=black, fill=white]{};

\draw (a1) -- (d1) -- (b1);
\draw (a1) -- (c1) -- (b1);
\draw (a1) -- (e1) -- (b1);
\draw (c1) -- (e1);
\draw (d1) -- (g1);
\end{tikzpicture}
 \caption{We substitute every leaf of $B_d$ by a subdivided $K_4$ (the dashed vertex denotes a leaf in $B_d$).}
\end{figure}

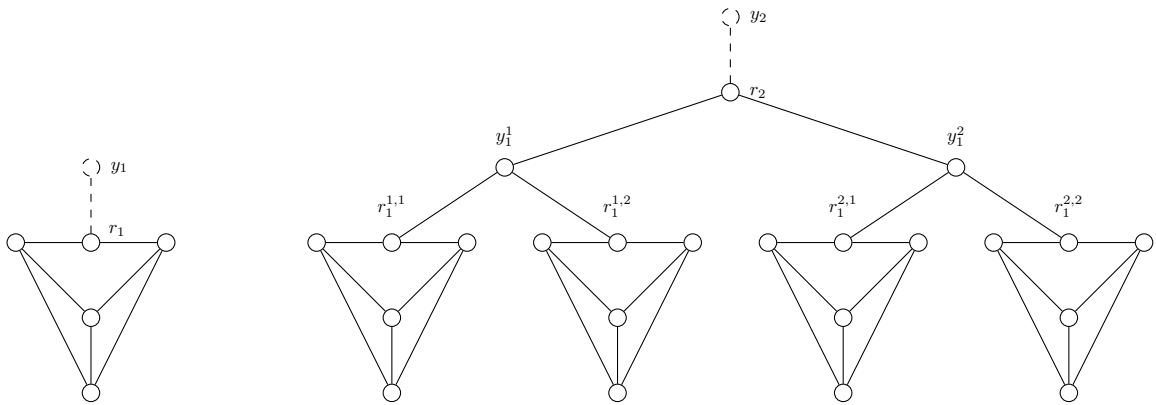
\begin{figure}[h]\label{fig:trees}
\begin{tikzpicture}[every node/.style={circle, scale=0.7}]

%G_1
\node (g1)  at (0,2) [draw=black,dashed, fill=white, label=right:{$y_1$}]{};
\node (a1)  at (-1,1) [draw=black, fill=white]{};
\node (d1)  at (0,1) [draw=black, fill=white, label={[xshift=0.5cm,yshift=-0.3cm]$r_1$}]{};
\node (e1)  at (0,0) [draw=black, fill=white]{};
\node (b1)  at (1,1) [draw=black, fill=white]{};
\node (c1)  at (0,-1) [draw=black, fill=white]{};

\draw (a1) -- (d1) -- (b1);
\draw (a1) -- (c1) -- (b1);
\draw (a1) -- (e1) -- (b1);
\draw (c1) -- (e1);

%G_2
% G11
\node (a11)  at (3,1) [draw=black, fill=white]{};
\node (d11)  at (4,1) [draw=black, fill=white,label = $r_1^{1,1}$]{};
\node (e11)  at (4,0) [draw=black, fill=white]{};
\node (b11)  at (5,1) [draw=black, fill=white]{};
\node (c11)  at (4,-1) [draw=black, fill=white]{};

\draw (a11) -- (d11) -- (b11);
\draw (a11) -- (c11) -- (b11);
\draw (a11) -- (e11) -- (b11);
\draw (c11) -- (e11);

% G12
\node (a12)  at (6,1) [draw=black, fill=white]{};
\node (d12)  at (7,1) [draw=black, fill=white,,label = $r_1^{1,2}$]{};
\node (e12)  at (7,0) [draw=black, fill=white]{};
\node (b12)  at (8,1) [draw=black, fill=white]{};
\node (c12)  at (7,-1) [draw=black, fill=white]{};

\draw (a12) -- (d12) -- (b12);
\draw (a12) -- (c12) -- (b12);
\draw (a12) -- (e12) -- (b12);
\draw (c12) -- (e12);

\node (h1)  at (5.5,2) [draw=black, fill=white, label = $y^1_1$]{};
\draw (d11) -- (h1) -- (d12);

% G13
\node (a13)  at (9,1) [draw=black, fill=white]{};
\node (d13)  at (10,1) [draw=black, fill=white,label = $r_1^{2,1}$]{};
\node (e13)  at (10,0) [draw=black, fill=white]{};
\node (b13)  at (11,1) [draw=black, fill=white]{};
\node (c13)  at (10,-1) [draw=black, fill=white]{};

\draw (a13) -- (d13) -- (b13);
\draw (a13) -- (c13) -- (b13);
\draw (a13) -- (e13) -- (b13);
\draw (c13) -- (e13);

% G14
\node (a14)  at (12,1) [draw=black, fill=white]{};
\node (d14)  at (13,1) [draw=black, fill=white,label = $r_1^{2,2}$]{};
\node (e14)  at (13,0) [draw=black, fill=white]{};
\node (b14)  at (14,1) [draw=black, fill=white]{};
\node (c14)  at (13,-1) [draw=black, fill=white]{};

\draw (a14) -- (d14) -- (b14);
\draw (a14) -- (c14) -- (b14);
\draw (a14) -- (e14) -- (b14);
\draw (c14) -- (e14);

\draw [dashed] (d1) -- (g1);

\node (h2)  at (11.5,2) [draw=black, fill=white, label = $y^2_1$]{};
\draw (d13) -- (h2) -- (d14);

\node (g2)  at (8.5,4) [draw=black,dashed, fill=white, label=right:{$y_2$}]{};
\node (x2)  at (8.5,3) [draw=black, fill=white, label=right:{$r_2$}]{};
\draw (h1) -- (x2) -- (h2);
\draw [dashed] (x2) -- (g2);
\end{tikzpicture}
 \caption{The graphs $G_1$ (left), and $G_2$ (right). The graphs $\widehat{G}_1$(left) and $\widehat{G}_2$(right) are the graphs containing $G_1$ and $G_2$ respectively, with the added dashed edge and vertex.}
\end{figure}

\begin{comment}
As both $G_n$ and $\widehat{G}_n$ have lots of cut-vertices, one might feel tempted to make use of the following theorem of Row \cite{row}, which gives a lower bound on the zero forcing number of a graph with a cut-vertex.
\begin{thm}[Row,\cite{row}]
Let $G$ be a graph with cut-vertex $v\in V(G)$. Let $W_1,\ldots, W_k$ be the vertex sets of the
connected components of $G - v$, and for $1 \leq i \leq k$, set $G_i = G[W_i\cup\{v\}]$. Then $Z(G) \geq \sum\limits_{i=1}^k Z(G_i) - k+1$.
\end{thm}

Nevertheless, we mention that the straightforward recursive application of the theorem on the root vertices only implies \[\frac{Z(\widehat{G}_n)}{|V(\widehat{G}_n)|}\geq \frac{1}{3} + o(1).\]
Indeed, let $v = r_n$, $W_1 = H_{n-1}^{1}$, $W_2 = H_{n-1}^{2}$, and $W_3 = \{y_n\}$. Theorem \cite{row} implies 
\[Z(\widehat{G}_n) \geq Z(G[H_{n-1}^{1}\cup\{r_n\}]) + Z(G[H_{n-1}^{2}\cup\{r_n\}]) -1.\]
Applying again the same inequality to the graphs $G[H_{n-1}^{1}\cup\{r_n\}]$ and $G[H_{n-1}^{2}\cup\{r_n\}]$ with roots $y_{n-1}^1$ and $y_{n-1}^2$ yields
\[Z(\widehat{G}_n) \geq 4Z(\widehat{G}_{n-1}) -3.\]
Although $Z(\widehat{G}_1) = 3 = \frac{1}{2}|\widehat{G}_1|$, straightforward calculation shows that the described approach only verifies $Z(\widehat{G}_n)\geq 2\cdot 4^{n-1} + 1  = \frac{1}{3}|\widehat{G}_n| + 1$.
\begin{center}
\begin{tabular}{ l| r|r|r|r| r }		
   $n$ & 1 & 2 & 3 & 4 & \\  \hline
  $|\widehat{G}_n|$ & 6 & 24 & 96 & 384 & $\ldots$\\ \hline
  $Z(\widehat{G}_n)\geq$ & 3 & 9 & 33& 129 & \\
 
  \hline  
\end{tabular}
\end{center}
\end{comment}
We take a closer look at the structure of $\widehat{G}_n$ to obtain the required lower bound on $Z(\widehat{G}_n)$. 
First, let the sequence $t_n$ be defined inductively as follows: $t_1 = 2$ and $t_{n+1} = 4t_n +2$ for every $n\geq 1$. 
Now we shall prove the following lemma.

\begin{lem}\label{binarylowerbound}
Let $F$ be a graph containing $\widehat{G}_n$ as an induced subgraph and such that there is no edge between $V(G_n)$ and $V(F)\backslash V(G_n)$. Then, for every zero forcing set $P$ of $F$, the following holds
\begin{itemize}
\item[i)] $|V(G_n) \cap P|\geq t_n$.
\item[ii)] If $|V(G_n) \cap P| = t_n$ then $r_n\not\in P$ and $V(G_n) \cap P$ does not force $r_n$ within $G_n$. 
\end{itemize}
\end{lem}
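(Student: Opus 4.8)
The plan is to argue by induction on $n$, proving i) and ii) together so that clause ii) can feed the induction. For the inductive step I decompose $G_n$ into the root $r_n$, its two neighbours $y_{n-1}^1,y_{n-1}^2$, and the four vertex-disjoint induced copies of $G_{n-1}$ rooted at the children $\rho_{i,j}$ of $y_{n-1}^i$ (for $i,j\in\{1,2\}$). The key structural remark is that each such copy, taken together with its parent $y_{n-1}^i$ playing the role of the attached leaf, is an induced copy of $\widehat G_{n-1}$ in $F$, and the hypothesis on $F$ guarantees that the copy of $G_{n-1}$ meets the rest of $F$ only along the single edge joining $\rho_{i,j}$ to $y_{n-1}^i$; hence the induction hypothesis applies to each of the four copies. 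I will write $a_{i,j}=|V(\text{copy }\rho_{i,j})\cap P|-t_{n-1}\ge 0$ for the surplus of a copy, and record whether $y_{n-1}^i\in P$ and whether $r_n\in P$. The base case $n=1$ is a finite check on the subdivided $K_4$: since the only outside help reaches $G_1$ through $r_1$, one may model this by adjoining $r_1$ to the black set, after which a short case analysis shows at least two black vertices are needed, and that the only tight black sets are $\{a,c\},\{a,e\},\{b,c\},\{b,e\}$, none of which contains or (on its own, inside $G_1$) forces $r_1$.

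The heart of the argument is a local claim at each gateway: for every $i$, the quantity $(\text{$1$ if }y_{n-1}^i\in P\text{ else }0)+a_{i,1}+a_{i,2}$ is at least $1$. To see this I assume the contrary, namely $y_{n-1}^i\notin P$ and both copies under $y_{n-1}^i$ tight. By clause ii) of the induction hypothesis neither $\rho_{i,1}$ nor $\rho_{i,2}$ lies in $P$, and neither is forced within its own copy; since a tight copy touches the rest of $F$ only through its root's edge to $y_{n-1}^i$, the only way to blacken such a root is for $y_{n-1}^i$ itself to force it. But $y_{n-1}^i$ has degree $3$, hence performs at most one force during the whole process, so it cannot blacken both roots, contradicting that $P$ forces all of $F$. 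Summing this local inequality over $i=1,2$ and adding the (nonnegative) contribution of $r_n$ gives $|V(G_n)\cap P|\ge 4t_{n-1}+2=t_n$, which is part i).

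For part ii) I suppose $|V(G_n)\cap P|=t_n$. Then the contribution of $r_n$ is $0$ (so $r_n\notin P$) and the local inequality is an equality for each $i$; in particular at most one copy under each $y_{n-1}^i$ carries surplus, so under each $y_{n-1}^i$ at least one copy is tight. I then show that $V(G_n)\cap P$ does not force $r_n$ within $G_n$. Assuming otherwise, some $y_{n-1}^i$ forces $r_n$ at a time $\tau$. Until $\tau$ the vertex $r_n$ is white, and since $r_n$ is a neighbour of $y_{n-1}^i$, this vertex performs no force before $\tau$ (its first possible force is $r_n$ itself). Consequently, before $\tau$ every vertex of a tight copy under $y_{n-1}^i$ is blackened only by forces internal to that copy, which are exactly valid forces of the standalone process on the copy; by clause ii) its root is never reached and is still white just before $\tau$. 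Then $y_{n-1}^i$ has two white neighbours, that root and $r_n$, contradicting that it forces $r_n$ at $\tau$.

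The step I expect to require the most care is the bookkeeping that licenses invoking clause ii): namely, that any forcing taking place inside a copy (whether in the global process on $F$ used for part i), or in the standalone process on $G_n$ used for part ii)) coincides, up to the moment its root is blackened, with the standalone process on that copy. This rests entirely on the separation property — each copy meets everything else only along the single root–$y_{n-1}^i$ edge — combined with the two elementary facts that a degree-$3$ vertex forces at most once and that $y_{n-1}^i$ is blocked from forcing while $r_n$ remains white. Pinning down the base case exactly (identifying the four tight configurations and verifying that each is stuck in the standalone process) is routine but must be done explicitly by hand.
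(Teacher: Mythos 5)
Your proof is correct and follows essentially the same route as the paper's: a joint induction on parts i) and ii), the decomposition of $G_n$ into four copies of $G_{n-1}$ each meeting the rest of $F$ only through the edge from its root to a gateway vertex $y_{n-1}^i$, and the key observation that $y_{n-1}^i$ cannot blacken the roots of two tight copies since it forces at most once. Your departures are only organizational --- you sum a per-gateway surplus inequality where the paper argues by contradiction on one branch $H_n^1$ (the paper notes the same bound $|V(H_n^i)\cap P|\geq 2t_n+1$), you merge the paper's cases a) $y_n^1\in P$ and b) $y_n^1\notin P$ via the remark that $y_{n-1}^i$ performs no force while $r_n$ is white, and you carry out explicitly the base case and the global-versus-standalone process bookkeeping that the paper treats as straightforward.
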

\begin{proof}
Both statements are straightforward for $n = 1$. For the inductive step, observe that if $|V(G_{n+1}) \cap P| \leq t_{n+1} - 1 = 4\cdot t_n + 1$, then we may assume $|V(H^1_n)\cap P|\leq 2\cdot t_n$. Since $|V(G^{1,1}_n)\cap P|$, $|V(G^{1,2}_n)\cap P|\geq t_n$ by induction, we must have $|V(G^{1,1}_n)\cap P| = |V(G^{1,2}_n)\cap P| = t_n$. From $(ii)$ we may deduce $r^{1,1}_n,r^{1,2}_n\not\in P$. Moreover, during the process none of these vertices can be forced by the vertices of $V(G^{1,1}_n)$ or $V(G^{1,2}_n)$, respectively. As a corollary,$r^{1,1}_n$ and $r^{1,2}$ must be forced by $y^1_n$, yet $y^1_n$ clearly can not force them simultaneously. This is a contradiction and it concludes the proof of part $i)$. Note that we have proved $|V(H^i_n) \cap P| \geq 2 \cdot t_n + 1$. 

Assume now that $|V(G_{n+1}) \cap P| = t_{n+1}$. Therefore, by the above, $|V(H^i_n) \cap P|=2 \cdot t_n + 1$ ($i\in \{1,2\}$), which implies $r_n\notin P$. Finally, suppose that $r_{n+1}\not\in P$ but it is forced during the process by a vertex in $G_{n+1}$. As $N_{G_{n+1}}(r_{n+1}) = \{y^1_n, y^2_n\}$, we may assume $y^1_n$ forced $r_{n+1}$. We proceed doing a casework:
\begin{itemize}
\item[a)] If $y^1_n\in P$, then we must have $|V(G^{1,1}_n) \cap P| = | V(G^{2,1}_n) \cap P| = t_n$. By the induction hypothesis, neither $r^{1,1}_n$ nor $r^{2,1}_n$ belong to $P$, and neither of them is forced by a vertex in their respective subgraph $G^{1,i}_n$. Thus $y^1_n$ cannot force $r_{n+1}$ as it has two unforced neighbors throughout the forcing process.
\item[b)] If $y^1_n\not\in P$, then it must be forced by $r^{1,1}_n$ or  $r^{1,2}_n$. Let us assume $r^{1,1}_n$ forced $y^1_n$, then we must have $|V(G^{1,1}_n) \cap P| \geq t_n + 1$ and we may deduce 
\begin{align*}
|V(G^{1,1}_n) \cap P|&=t_n +1 \text{ and }\\
|V(G^{1,2}_n) \cap P|&= t_n  \\
\end{align*}
Hence, again by induction, $r^{1,2}_n$ does not belong to $P$ and can not be forced within $G^{1,2}_n$. Although $y^1_n$ might indeed be forced by $r^{1,1}_n$, it still has two white neighbors $r^{1,2}_n$ and $r_{n+1}$ thus it cannot force $r_{n+1}$, which is a contradiction. This completes our case check and the proof of the lemma.
\end{itemize}
\end{proof}
\begin{corollary}
 $Z(\widehat{G}_n)\geq \frac{4}{9}|V(\widehat{G}_n)|$, for all $n\geq 1$. 
\end{corollary}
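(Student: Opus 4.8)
The plan is to deduce the corollary from Lemma~\ref{binarylowerbound} applied to the graph $F=\widehat{G}_n$ itself (here the requirement that $G_n$ has no neighbours outside $\widehat G_n$ holds vacuously, since $V(F)\setminus V(\widehat G_n)=\emptyset$). Let $P$ be a minimum zero forcing set of $\widehat G_n$, so $|P|=Z(\widehat G_n)$. The lemma immediately gives $|V(G_n)\cap P|\ge t_n$, but a short computation shows this alone falls just short of the target. Writing out the closed forms, one finds $|V(\widehat G_n)|=6\cdot 4^{n-1}$ (from $|V(G_1)|=5$ and the recursion $|V(G_n)|=4|V(G_{n-1})|+3$, as $G_n$ consists of four copies of $G_{n-1}$ together with $r_n,y_{n-1}^1,y_{n-1}^2$) and $t_n=\tfrac23(4^n-1)$ (solving $t_1=2$, $t_{n+1}=4t_n+2$), so that $\tfrac49|V(\widehat G_n)|=\tfrac23 4^n = t_n+\tfrac23$. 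Hence it suffices to upgrade the bound from $|P|\ge t_n$ to $|P|\ge t_n+1$.

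To gain this extra unit I would split into cases according to $|V(G_n)\cap P|$ and whether the pendant vertex $y_n$ lies in $P$. If $|V(G_n)\cap P|\ge t_n+1$ we are done at once, and if $y_n\in P$ then $|P|\ge |V(G_n)\cap P|+1\ge t_n+1$, again done. The only remaining case is $|V(G_n)\cap P|=t_n$ together with $y_n\notin P$, i.e.\ $P\subseteq V(G_n)$ and $|P|=t_n$; here I would derive a contradiction with the assumption that $P$ forces all of $\widehat G_n$. By part (ii) of the lemma, $r_n\notin P$ and $P$ does not force $r_n$ within $G_n$. Since $y_n$ is a leaf whose only neighbour is $r_n$, the vertex $y_n$ can be forced only after $r_n$ becomes black, so while $y_n$ is white it is inert. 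Consequently every force occurring in the process on $\widehat G_n$ \emph{before} $r_n$ turns black is also a legal force in $G_n$: the neighbourhoods of all vertices other than $r_n$ are unaffected by deleting $y_n$, and $y_n$ cannot force $r_n$ while white. Thus if $r_n$ were ever blackened in $\widehat G_n$ it would already be forced within $G_n$ starting from $P$, contradicting part (ii). Therefore $r_n$ is never blackened, hence neither is $y_n$, so $P$ is not a zero forcing set --- a contradiction. This rules out the last case and establishes $|P|\ge t_n+1$.

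Combining the pieces yields $Z(\widehat G_n)=|P|\ge t_n+1=\tfrac{2\cdot 4^n+1}{3}\ge \tfrac{2\cdot 4^n}{3}=\tfrac49|V(\widehat G_n)|$, as required. The main obstacle is exactly the $\tfrac23$ gap between the lemma's cardinality bound $t_n$ and the quantity $\tfrac49|V(\widehat G_n)|$: closing it forces one to use the structural refinement in part (ii) of the lemma rather than the mere counting bound in part (i). The cleanest device is the pendant-leaf argument above, which shows that spending only $t_n$ vertices inside $G_n$ can never force $r_n$, and hence leaves $y_n$ stranded unless one extra vertex is paid for.
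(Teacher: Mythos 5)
Your proof is correct and takes essentially the same route as the paper: apply Lemma~\ref{binarylowerbound} with $F=\widehat{G}_n$ to conclude $Z(\widehat{G}_n)\ge t_n+1$, then compare $t_n+1=\frac{8\cdot 4^{n-1}+1}{3}$ against $\frac{4}{9}|V(\widehat{G}_n)|=\frac{4}{9}\cdot 6\cdot 4^{n-1}$. The paper states $Z(\widehat{G}_n)\ge t_n+1$ directly ``by Lemma~\ref{binarylowerbound}'' without elaboration, and your case analysis --- handling $y_n\in P$ separately and using part (ii) together with the pendant-leaf argument to rule out $|P|=t_n$ --- is precisely the implicit justification of that step, so your write-up is a fully detailed version of the paper's own argument.
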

\begin{proof}
Observe that $t_n + 1 = \frac{8\cdot 4^{n-1}+1}{3}$ and $|V(\widehat{G}_n)|= 6\cdot 4^{n-1}$. Now, by Lemma \ref{binarylowerbound}, $Z(\widehat{G}_n)\geq t_n+1$ and therefore $Z(\widehat{G}_n) \geq \left( \frac{4}{9} + \frac{1}{18\cdot 4^{n-1}}\right)|V(\widehat{G}_n)|$. 
\end{proof}
We end this section by determining the exact value of the zero forcing numbers of $G_n$ and $\widehat{G}_n$.

\begin{proposition}\label{prop:tight}
$Z(G_n)=Z(\widehat{G}_n)= t_n+1$.
\end{proposition}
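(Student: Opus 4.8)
The plan is to prove the two equalities by establishing matching lower and upper bounds. For $\widehat{G}_n$ the lower bound $Z(\widehat G_n)\ge t_n+1$ is exactly the corollary preceding this proposition, so three things remain: the lower bound $Z(G_n)\ge t_n+1$, and the two upper bounds $Z(G_n),Z(\widehat G_n)\le t_n+1$. I will obtain both upper bounds from a single recursive construction, and the remaining lower bound by re-running the argument of Lemma~\ref{binarylowerbound} one level higher.

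For the lower bound on $Z(G_n)$ (with $n\ge 2$; the case $n=1$ is a direct check on the subdivided $K_4$, where no set of at most two vertices is a zero forcing set), let $P$ be any zero forcing set of $G_n$. Each of the four copies $G^{i,j}_{n-1}$ sits inside $G_n$ attached to the rest only through the single edge $r^{i,j}_{n-1}y^i_{n-1}$, so Lemma~\ref{binarylowerbound} applies to each of them, and the pigeonhole--forcing argument of that proof gives $|P\cap V(H^i_{n-1})|\ge 2t_{n-1}+1$ for $i\in\{1,2\}$, whence $|P|\ge 2(2t_{n-1}+1)=t_n$. If equality held, then $r_n\notin P$ and each half would contain exactly $2t_{n-1}+1$ vertices of $P$; repeating cases (a) and (b) of the proof of Lemma~\ref{binarylowerbound} verbatim then shows that $r_n$ cannot be forced from within $G_n$. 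Since in $G_n$ the vertex $r_n$ has no neighbour outside $G_n$, it would never be forced, contradicting that $P$ is a zero forcing set. Hence $|P|\ge t_n+1$.

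For the upper bounds I would set up two recursive gadgets inside a copy of $G_n$: a \emph{Type~A} set, a subset $A_n\subseteq V(G_n)$ with $|A_n|=t_n+1$ that zero-forces $\widehat G_n$ (so in particular it zero-forces $G_n$), and a \emph{Type~B} set, a subset $T_n\subseteq V(G_n)\setminus\{r_n\}$ with $|T_n|=t_n$ such that $T_n\cup\{r_n\}$ zero-forces $G_n$. On $G_1$ one checks that $A_1=\{r_1,a,c\}$ and $T_1=\{a,c\}$ work. For the inductive step, to build a Type~A set of $G_{n+1}$ I place Type~A sets in three of the four copies $G^{i,j}_n$ and a Type~B set in the fourth (total $3(t_n+1)+t_n=t_{n+1}+1$); to build a Type~B set I place a Type~A set in one copy of each half and a Type~B set in the other (total $2(t_n+1)+2t_n=t_{n+1}$, with $r_{n+1}$ left out). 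In each case I verify by tracing the process: the Type~A copies force themselves and push colour up to the connectors $y^i_n$; as soon as both copies below some $y^i_n$ are black, $y^i_n$ forces $r_{n+1}$ (or $r_{n+1}$ is supplied, in the Type~B case); the connectors then force the roots of the Type~B copies, which complete by definition, and finally $r_{n+1}$ forces $y_{n+1}$. The trace reaches all of $G_{n+1}$ black before the last step, so the constructed sets give $Z(G_{n+1})\le t_{n+1}+1$ and $Z(\widehat G_{n+1})\le t_{n+1}+1$ simultaneously.

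The main obstacle I expect is the bookkeeping in this construction: checking that the forces fire in the correct order through the shared connectors $y^i_n$ and the root $r_{n+1}$, and confirming that a Type~A copy still forces itself once its root $r^{i,j}_n$ carries the extra, initially white, external neighbour $y^i_n$. The latter holds because zero forcing is monotone in the set of black vertices, so an extra neighbour that can only turn black cannot obstruct any force internal to the copy; making this monotonicity step explicit is the one place where care is genuinely needed. Once these are in place, the two upper bounds, together with the corollary and the lower bound on $Z(G_n)$, give $Z(G_n)=Z(\widehat G_n)=t_n+1$.
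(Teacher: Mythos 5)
Your proposal is correct and, at heart, follows the same route as the paper: lower bounds extracted from Lemma~\ref{binarylowerbound}, and matching upper bounds from a recursive construction of forcing sets in which the root is never required to force downward. Two of your deviations are worth noting, and one sentence needs repair. First, the paper simply asserts that Lemma~\ref{binarylowerbound} gives $Z(G_n)\ge t_n+1$, although the lemma as stated applies to hosts $F$ containing $\widehat{G}_n$; your rerun of the pigeonhole argument one level down inside $F=G_n$ (each $\widehat{G}^{\,i,j}_{n-1}$ sits induced in $G_n$, attached only through $y^i_{n-1}$) supplies exactly the justification the paper glosses over. Second, your Type~A/Type~B pair is the paper's single gadget unfolded: the paper builds one set $P_n$ of size $t_n+1$ containing $r_n$ such that $r_n$ never has to force (properties a) and b)), so $P_n$ is essentially your $A_n$ (up to root membership) and $P_n\setminus\{r_n\}$ is your $T_n$, and your step ``three A's and one B'' corresponds to the paper's $\{r_{n+1}\}$ plus four copies of $P_n$ with the roots $r^{1,2}_n, r^{2,2}_n$ deleted. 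The flawed spot is your monotonicity justification: the claim that ``an extra neighbour that can only turn black cannot obstruct any force internal to the copy'' is false as a general principle, since an extra white neighbour of a vertex $v$ blocks \emph{every} force performed by $v$ until it turns black --- this obstruction is precisely why the paper needs property b). What actually saves your construction is your own definition of Type~A: $A_n$ is required to force $\widehat{G}_n$, whose root already carries the pendant $y_n$, and inside $G_{n+1}$ the connector $y^i_n$ plays exactly the role of that pendant; the genuinely new neighbours ($r_{n+1}$ and the sibling root) are attached to $y^i_n$ rather than to any vertex of the copy, and since a force $v\to u$ depends only on the colours of the neighbours of $v$, every force performed by a copy vertex replays verbatim, while the forces performed by $y^i_n$ itself are the ones you check separately in your trace. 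Replace the monotonicity sentence by this observation (and the analogous remark for the delayed activation of a Type~B root, whose unique external neighbour $y^i_n$ is already black when it forces the root) and your proof is complete.
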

\begin{proof}
Lemma \ref{binarylowerbound} implies both $Z(G_n)$ and $Z(\widehat{G}_n)$ are greater or equal to $t_n+1$. We shall prove equality holds, by induction on $n$. To do so, we will prove a stronger assertion, namely that $G_n$ has a \textit{zero forcing set} $P_n$ of size $t_n+1$ satisfying the following properties:
\begin{itemize}
\item[a)] it contains  $r_n$,
\item[b)] $r_n$ does not need to force any of its neighbors.
\end{itemize}
The set $P_1$ can easily be found in $G_1$. For the inductive step, let $P_{n+1}$ be the union of $r_{n+1}$ with four ismorphic copies of the \textit{zero forcing set} $P_{n}$ inside each $G^{i,j}_n$ ($i,j \in \{1,2\}$), but with the two roots $r^{1,2}_n$ and $r^{2,2}_n$ removed. Clearly $P_{n+1}$ has size $4\cdot t_{n}+3=t_{n+1}+1$ and satifies $i)$. 
It is also easy to see that the vertices of both subgraphs $G^{1,1}_n$ and $G^{2,1}_n$ will be forced by the vertices in $P_{n+1}\cap G^{1,1}_n$ and $P_{n+1}\cap G^{1,1}_n$, respectively. (observe that this step requires the forcing to be completed without the active involvement of the root). Now, as $r_{n+1}$ is black, $y^1_n$ and $y^2_n$ will force $r^{1,2}_n$ and $r^{2,2}_n$, respectively. Using induction again it follows both $G^{1,2}_n$ and $G^{2,2}_n$ will become black. Hence, $P_{n+1}$ is a \textit{zero forcing set} and $r_{n+1}$ does not need to force any of its neighbours. From $ii)$ we deduce $P_{n+1}$ is also a \textit{zero forcing set} of $\widehat{G}_n$.
\end{proof}

\section{Additional Remarks}

One of the most interesting remaining questions in the field is to find the value of $z_3$. Knowing our constructions, we believe the result of Amos et al. gives the correct value of $z_3$. We formulate this belief as a conjecture:
\begin{conjecture}
$z_3=1/2$.
\end{conjecture}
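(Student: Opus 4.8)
The lower bound $Z(G_n), Z(\widehat G_n)\ge t_n+1$ is already in hand, being exactly the content of Lemma \ref{binarylowerbound} together with the computation in its corollary, so the entire task is to produce a zero forcing set of size $t_n+1$ and to check that it works for both graphs. The plan is to prove the matching upper bound by induction on $n$, but—as is typical for recursive forcing arguments—the bare statement is too weak to feed back into itself. I would therefore strengthen the inductive hypothesis, asserting the existence of a zero forcing set $P_n$ of $G_n$ with $|P_n|=t_n+1$ and two extra features: (a) $r_n\in P_n$, and (b) the forcing started from $P_n$ fills all of $G_n$ via some order in which $r_n$ is never called upon to force one of its neighbours. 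Property (b) is the key to gluing copies together, because in $G_{n+1}$ each sub-copy's root acquires an extra neighbour (a vertex $y^i_n$), and I must be certain that the internal forcing of that sub-copy is never blocked by, nor dependent on, that extra edge.

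For the base case I would exhibit $P_1$ directly inside the subdivided $K_4$ that is $G_1$, verifying by hand that a suitable set of size $t_1+1=3$ satisfies (a) and (b). For the inductive step I would assemble $P_{n+1}$ from four isomorphic translates of $P_n$, one in each of $G^{1,1}_n, G^{1,2}_n, G^{2,1}_n, G^{2,2}_n$, then delete the two roots $r^{1,2}_n$ and $r^{2,2}_n$, and finally adjoin the new root $r_{n+1}$. The cardinality is then $4(t_n+1)-2+1=4t_n+3=t_{n+1}+1$, as required, and (a) is immediate.

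The heart of the argument is to exhibit an explicit order in which the forces occur. First, the two sub-copies $G^{1,1}_n$ and $G^{2,1}_n$ carry full translates of $P_n$ (roots included), so by (b) each fills completely on its own without its root ever needing the edge to $y^i_n$—which is precisely why (b) was imposed. Once $G^{1,1}_n$ is entirely black, its root $r^{1,1}_n$ has all internal neighbours black and so forces $y^1_n$; symmetrically $r^{2,1}_n$ forces $y^2_n$. Now each $y^i_n$ is black and $r_{n+1}\in P_{n+1}$ is black, so the unique white neighbour of $y^1_n$ is the deleted root $r^{1,2}_n$, which it forces, and likewise $y^2_n$ forces $r^{2,2}_n$. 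With these roots restored to black, the sets inside $G^{1,2}_n$ and $G^{2,2}_n$ are again full translates of $P_n$ and so fill those sub-copies by induction. Every vertex of $G_{n+1}$ is thereby blackened, and since $y^1_n$ and $y^2_n$ were each forced from below rather than by $r_{n+1}$, property (b) is preserved. Finally, as $r_n\in P_n$ finishes with all its internal neighbours black, it can force the single pendant vertex $y_n$, so the very same $P_n$ is a zero forcing set of $\widehat G_n$, giving $Z(\widehat G_n)\le t_n+1$ as well.

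The main obstacle I anticipate is not the cardinality bookkeeping but the boundary interaction at the roots: at every stage one must check that the vertex about to force genuinely has a \emph{unique} white neighbour, and in particular that the extra edges $r^{i,j}_n y^i_n$ neither stall the internal forcing of a sub-copy nor permit a root to force upward prematurely. Packaging this into the single clean invariant (b)—``the root never needs to force''—is what keeps the induction honest; without it one would have to track the colour of the vertices $y^i_n$ throughout each sub-copy's forcing, which is exactly where a careless argument would break.
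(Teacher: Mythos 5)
You were asked about the paper's Conjecture that $z_3=1/2$, and this is precisely where your proposal misses: that statement is left open in the paper (it is stated as a conjecture, with no proof), and your argument does not prove it. What you have written is, in substance, the paper's own proof of Proposition~\ref{prop:tight}, namely that $Z(G_n)=Z(\widehat{G}_n)=t_n+1$: the strengthened inductive hypothesis (a zero forcing set $P_n$ of size $t_n+1$ containing $r_n$ such that $r_n$ never needs to force a neighbour), the assembly of $P_{n+1}$ from four translates of $P_n$ with the two roots $r^{1,2}_n$ and $r^{2,2}_n$ deleted, and the forcing order in which $G^{1,1}_n$ and $G^{2,1}_n$ fill first, their roots then force $y^1_n$ and $y^2_n$, which in turn force the deleted roots, are all exactly the steps of that proof, and they are carried out correctly.

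The gap is that this tightness result cannot, even in principle, establish $z_3=1/2$. Since $t_n+1=\frac{8\cdot 4^{n-1}+1}{3}$ and $|V(\widehat{G}_n)|=6\cdot 4^{n-1}$, your conclusion gives $\frac{Z(\widehat{G}_n)}{|V(\widehat{G}_n)|}=\frac{4}{9}+\frac{1}{18\cdot 4^{n-1}}\to\frac{4}{9}$, i.e., only the lower bound $z_3\geq\frac{4}{9}$ already obtained from Lemma~\ref{binarylowerbound}; indeed, proving the matching upper bound $Z(\widehat{G}_n)\leq t_n+1$ shows that this family \emph{saturates} at $\frac{4}{9}$ and can never witness a ratio approaching $\frac{1}{2}$. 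To prove the conjecture one would need, in addition to the known bound $z_3\leq\frac{1}{2}$ of Amos et al., a sequence of connected graphs of maximum degree $3$ and unbounded order whose forcing ratio tends to $\frac{1}{2}$, and no such construction appears in your proposal (or in the paper, where the question remains open). In short: your argument is a correct proof of a different statement, Proposition~\ref{prop:tight}, and the conjectured equality $z_3=1/2$ is untouched by it.
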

The counterexamples we presented in this note used the idea of an appropriate "injection" of a subdivided $K_4$ in certain base graphs; we mention that, although the bound we obtained used binary trees as base graphs, we were able to beat the conjectured upper bound of $\frac{1}{3}n + 2$ using different base graphs. For example, we state the following result (without proof):
\begin{proposition}
Let $n$ be divisible by $6$ and let $C_n$ denote the cycle on $n$ vertices. Furthermore, set $\widehat{C}_n$ to be the graph obtained by attaching a distinct leaf to every vertex in $C_n$, and finally, let $G_n$ be constructed from $\widehat{C}_n$ by replacing every leaf with the subdivided $K_4$ graph. Then, $\frac{Z(G_n)}{|V(G_n)|}\geq \frac{5}{12}$.
\end{proposition}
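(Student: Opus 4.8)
The plan is as follows. Write $v_1,\dots,v_n$ for the vertices of $C_n$ and, for each $i$, let $\Gamma_i$ be the subdivided $K_4$ glued to $v_i$, with root $\rho_i$ (the copy of $r_1$); thus $\rho_i$ is the only vertex of $\Gamma_i$ adjacent to $v_i$, and $\Gamma_i$ meets the rest of $G_n$ solely in the edge $\rho_i v_i$. Since each gadget has five vertices, $|V(G_n)|=6n$, so it suffices to prove $Z(G_n)\ge \tfrac52 n$. I would fix an arbitrary zero forcing set $P$, set $S=P\cap V(C_n)$ and $f_i=|P\cap V(\Gamma_i)|$, and record $|P|=|S|+\sum_{i}f_i$.

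First I would feed each gadget into the $n=1$ case of Lemma \ref{binarylowerbound}: the subgraph induced by $V(\Gamma_i)\cup\{v_i\}$ is a copy of $\widehat G_1$ that is joined to the remainder of $G_n$ only through $v_i$, so the lemma gives $f_i\ge 2$, and if $f_i=2$ then $\rho_i\notin P$ and $\rho_i$ is never forced from inside $\Gamma_i$. As $v_i$ is the unique external neighbour of $\rho_i$, in that case $v_i$ must be the vertex that forces $\rho_i$. Put $A=\{i:f_i=2\}$ and $B=\{i:f_i\ge3\}$, $a=|A|$, $b=|B|$, $a+b=n$; then $\sum_i f_i\ge 2a+3b=2n+b$, whence $|P|\ge |S|+2n+b=|S|+3n-a$.

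The crux is a forcing-chain count on the cycle. I would form the digraph $D$ on $\{v_1,\dots,v_n\}$ with an arc $v_i\to v_j$ exactly when $v_i$ forces its cycle-neighbour $v_j$. Because every vertex forces at most one vertex and is forced by at most one, $D$ has all in- and out-degrees at most $1$; and $D$ has no directed cycle, since a directed Hamilton cycle of $C_n$ would leave no vertex from which the process could start. Hence $D$ is a disjoint union of directed paths, for which the numbers of sources, of sinks, and of paths all coincide. Each $i\in A$ uses up its one force on $\rho_i$, so $v_i$ is a sink of $D$; thus $D$ has at least $a$ sinks, and therefore at least $a$ sources. A source is a cycle vertex not forced along the cycle, i.e.\ one that lies in $S$ or is forced by its own gadget root; writing $G$ for the second set, this reads $|S|+|G|\ge a$. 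Finally $G\subseteq\{v_i:i\in B\}$, because for $i\in A$ the root $\rho_i$ is forced by $v_i$ and so cannot force $v_i$; hence $|G|\le b$ and $|S|\ge a-b$.

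Assembling the pieces, $|P|\ge |S|+2n+b\ge (a-b)+2n+b=a+2n$, while also $|P|\ge 2n+b=3n-a$; adding the two bounds yields $2|P|\ge 5n$, that is $Z(G_n)\ge\tfrac52 n=\tfrac{5}{12}|V(G_n)|$. I expect the third paragraph to be the main obstacle: the delicate points are to extract from Lemma \ref{binarylowerbound} that a tight gadget forces its root outward (making $v_i$ a sink), and to justify that $D$ contains no directed cycle, after which the exact source/sink balance does the real work. A minor preliminary check is that the $n=1$ case of the lemma applies unchanged when the gadget dangles from a degree-$3$ cycle vertex rather than a true leaf, which holds because the hypothesis only asks that $\Gamma_i$ touch the rest of $G_n$ through the single edge $\rho_i v_i$.
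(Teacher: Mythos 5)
Your proof is correct, but there is nothing in the paper to compare it against: the authors explicitly state this proposition \emph{without proof}, so your argument actually fills a gap rather than paralleling an existing one. What you do is consistent with the paper's toolkit and plausibly close to what the authors had in mind: the $n=1$ case of Lemma \ref{binarylowerbound} applies verbatim to each gadget $\Gamma_i$ (you rightly note that the lemma's hypothesis, read as intended --- the copy of $G_1$ meets the rest of the host graph only through the edge $\rho_i v_i$; the literal phrasing ``no edge between $V(G_n)$ and $V(F)\setminus V(G_n)$'' is a slip in the paper, since even $r_ny_n$ would violate it --- is satisfied here even though $v_i$ has degree $3$), giving $f_i\ge 2$ with the key rigidity that a tight gadget's root is neither in $P$ nor forced from inside, hence must consume $v_i$'s unique force. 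Your source/sink count on the digraph of cycle forces is sound: out- and in-degrees are at most $1$ since each vertex forces at most once and is forced at most once (fixing a chronological list of forces, exactly as the paper itself does implicitly in the proof of Lemma \ref{binarylowerbound}), a directed cycle would have to be all of $C_n$ and is ruled out by considering the chronologically first force, and the resulting path decomposition yields $|S|+|G|\ge a$ with $|G|\le b$ because for $i\in A$ the root $\rho_i$ becomes black only after $v_i$ does. The two linear bounds $|P|\ge 2n+a$ and $|P|\ge 2n+b$ (equivalently $|P|\ge 2n+\max(a,b)$) then give $Z(G_n)\ge \tfrac{5}{2}n=\tfrac{5}{12}|V(G_n)|$, exactly the claimed constant, so the argument is tight in the sense that no slack is wasted. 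In short: a complete and correct proof of a statement the paper leaves unproved, built on the paper's own lemma plus a clean forcing-chain count.
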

It would be interesting to know if the presented injection technique with the appropriate choice of a base graph can imply even better lower bounds on $z_3$.

\section{Acknowledgment} We would like to thank the anonymous reviewers for their careful reading of our manuscript and their many insightful comments and suggestions that improved the presentation of our article.

The first author would like to thank B\'ela Bollob\'as for the invitation to visit the University of Memphis while this research has been done.

\end{document}